\newtheorem{definition}{Definition}
\newtheorem{theorem}{Theorem}
\newtheorem{lemma}{Lemma}
\newtheorem{remark}{Remark}
\renewcommand{\vec}[1]{\boldsymbol{\mathbf{#1}}}
\renewcommand{\ker}[1]{\mathrm{ker}(#1)}
\newcommand{\vspan}[1]{\mathrm{span}(\{#1\})}
\newcommand{\diag}[1]{\mathrm{diag}(#1)}
\newcommand{\iu}{{i\mkern1mu}}
\newcommand{\inner}[2]{\langle #1, #2 \rangle_H}
\begin{document}

\begin{frontmatter}
\title{On an eigenvalue property of Summation-By-Parts operators}
\author[LU]{Viktor Linders}
\ead{viktor.linders@math.lu.se}
\address[LU]{Center for Mathematical Sciences,
Lund University,
Lund,
Sweden}


\begin{abstract}
Summation-By-Parts (SBP) methods provide a systematic way of constructing provably stable numerical schemes. However, many proofs of convergence and accuracy rely on the assumption that the SBP operator possesses a particular eigenvalue property. In this note, three results pertaining to this property are proven. Firstly, the eigenvalue property does not hold for all nullspace consistent SBP operators. Secondly, this issue can be addressed without affecting the accuracy of the method by adding a specially designed, arbitrarily small perturbation term to the SBP operator. Thirdly, all pseudospectral methods satisfy the eigenvalue property.
\end{abstract}

\begin{keyword}
Summation-By-Parts; eigenvalues; pseudospectral methods
\end{keyword}

\end{frontmatter}


\section{Introduction} \label{sec:Introduction}

The Summation-By-Parts (SBP) methodology \cite{fernandez2014review,svard2014review} and its extensions (generalized SBP \cite{fernandez2014generalized} and upwind SBP \cite{mattsson2017diagonal}) constitute an algebraic framework for designing provably stable discretizations of partial differential equations. SBP operators can be designed within essentially every family of spatial discretizations, including finite difference methods \cite{kreiss1974finite,strand1994summation}, finite volume methods \cite{nordstrom2001finite}, finite element methods \cite{abgrall2020analysis,abgrall2021analysis}, pseudospectral methods \cite{carpenter1996spectral} and the related discontinuous Galerkin \cite{gassner2013skew} and Flux Reconstruction methods \cite{ranocha2016summation}, as well as more specialized discretizations such as WENO \cite{yamaleev2009systematic,fisher2011boundary} and DRP schemes \cite{linders2016summation,linders2017summation}. Further, they can be used as time marching schemes akin to implicit Runge-Kutta methods \cite{nordstrom2013summation,lundquist2014sbp,boom2015high}, in which case they satisfy an assortment of desirable stability and convergence properties \cite{linders2020properties,nordstrom2018well}. All such methods share certain algebraic similarities that can be exploited to obtain stable and accurate discretizations.

This paper addresses a difficulty pertaining to an eigenvalue property that frequently arises in SBP theory. Much of what is known about the convergence of SBP methods relies on this property. It is known that there are methods that lack the property, essentially due to the existence of SBP operators with "bad" null-spaces. Here, we consider SBP operators that do not suffer from bad nullspaces; so called nullspace consistent methods.

Three new results will be presented: After introducing nullspace consistent SBP operators in Section~\ref{sec:SBP}, it is shown that not every such operator satisfies the eigenvalue property. In Section~\ref{sec:Perturbations} it is established that for each SBP operator that lacks the property, another SBP operator of the same order can be found that possesses it, and that differs from the first one by an arbitrarily small perturbation. Section~\ref{sec:Pseudospectral} discusses pseudospectral methods, all of which are shown to satisfy the eigenvalue property. This generalizes an earlier result on the topic \cite{ruggiu2018pseudo}. A summary is given in Section~\ref{sec:Summary}.


\section{SBP operators and the eigenvalue property} \label{sec:SBP}

This section introduces the notion of nullspace consistent SBP operators as well as the eigenvalue property. Let $[a,b]$ be an interval with $b>a$.

\begin{definition} \label{def:SBP}
The matrices $D_+$, $D_- \in \mathbb{R}^{(n+1) \times (n+1)}$ are said to form a pair of SBP operators of order $q \geq 1$ on the interval $[a,b]$ if there exist matrices $H$, $S \in \mathbb{R}^{(n+1) \times (n+1)}$ and vectors $\vec{p}_0$, $\vec{p}_n$, $\vec{x} \in \mathbb{R}^{n+1}$ such that the following relations hold:
\begin{enumerate}[label=\emph{(\Alph*)}]
\item $D_\pm \vec{x}^j = j \vec{x}^{j-1}, \quad \vec{p}_0^\top \vec{x}^j = a^j, \quad \vec{p}_n^\top \vec{x}^j = b^j, \quad j = 0, \dots, q$,
\item $H = H^\top > 0$,
\item $H D_+ + D_+^\top H = -\vec{p}_0 \vec{p}_0^\top + \vec{p}_n \vec{p}_n^\top + S, \quad S = S^\top \geq 0$,
\item $H D_+ + D_-^\top H = -\vec{p}_0 \vec{p}_0^\top + \vec{p}_n \vec{p}_n^\top$,
\item $\vec{x} = (x_0, \dots, x_s)^\top, \quad x_i \neq x_j \, \forall \, i \neq j$.
\end{enumerate}
\end{definition}

Here and elsewhere the notation $\vec{x}^j$ should be understood as the elementwise exponentiation of $\vec{x}$. The convention $\vec{x}^0 = \vec{1} := (1,\dots,1)^\top$ is used throughout. Definition~\ref{def:SBP} incorporates classical ($S = 0$, $\vec{p}_0 \equiv \vec{e}_0 := (1,0,\dots,0)^\top$, $\vec{p}_n \equiv \vec{e}_n := (0,\dots,0,1)^\top$), generalized ($S = 0$) and upwind SBP methods ($S \neq 0$, $\vec{p}_0 = \vec{e}_0$, $\vec{p}_n = \vec{e}_n$) as special cases.

From (A) it is seen that $D_+$ and $D_-$ approximate derivative operators and that $\vec{p}_0$ and $\vec{p}_n$ interpolate grid functions to the domain boundaries. Further, $H$ defines a quadrature rule \cite{hicken2013summation,linders2018order}. The role of $S$ is somewhat obfuscated in Definition~\ref{def:SBP}, which warrants a comment: Subtracting (D) from (C), transposing and multiplying by $H^{-1}$ shows that $D_- = D_+ - H^{-1}S$. From (A) and (B) it can then be deduced that
\begin{equation} \label{eq:S_conditions}
    S \vec{x}^j = \vec{0}, \quad j = 0, \dots, q.
\end{equation}
Together with positive semi-definiteness, this reveals that $S$ contributes with artificial dissipation to the operator; see e.g. \cite{mattsson2017diagonal,mattsson2004stable}.

SBP operators are frequently used in combination with \emph{simultaneous approximation terms} (SATs) that weakly impose initial, boundary or interface conditions \cite{carpenter1994time}. Suppose that an inflow-outflow problem on $[a,b]$ is described by the the scalar differential equation
\begin{equation} \label{eq:ODE}
    u' = f, \quad u(a) = u_0,
\end{equation}
where $f$ and $u_0$ are given. An SBP-SAT discretization takes the form
\begin{equation} \label{eq:sample_equation}
    D_+ \vec{u} = \vec{f} + \sigma H^{-1} \vec{p}_0 ( u_0 - \vec{p}_0^\top \vec{u} ),
\end{equation}
where the second term on the right-hand side is the SAT. In \eqref{eq:sample_equation}, $\sigma$ is a scalar parameter that is chosen to ensure stability. The choice $\sigma = 1$ is almost exclusively used in practice, hence we focus on this case here.

\begin{remark}
If the flow in \eqref{eq:ODE} is reversed, then $D_-$ is used and the boundary condition is imposed at $b$ using $\vec{p}_n$ and $\sigma = -1$.
\end{remark}

Collecting terms that multiply the solution $u$, \eqref{eq:sample_equation} may be rewritten as
\begin{equation} \label{def:D_tilde}
    \tilde{D}_+ \vec{u} \equiv (D_+ + H^{-1} \vec{p}_0 \vec{p}_0^\top) \vec{u} = \vec{f} + H^{-1} \vec{p}_0 u_0.
\end{equation}
A necessary condition for the existence of a unique solution to \eqref{def:D_tilde} is that the matrix $\tilde{D}_+$ is invertible. SBP operators that have this property can be characterized in terms of \emph{nullspace consistency}, a concept introduced in \cite{svard2019convergence}:

\begin{definition} \label{def:nullspace_consistent}
An SBP operator is said to be nullspace consistent if
$$
\ker{D_+} = \vspan{\vec{1}}.
$$
\end{definition}

For a proof of the following lemma, see \cite[Lemma 2]{linders2020properties}.
\begin{lemma} \label{lemma:nullspace_consistency}
For any SBP operator $D_+$, the matrix $\tilde{D}_+ := (D_+ + H^{-1} \vec{p}_0 \vec{p}_0^\top)$ is invertible if and only if $D_+$ is null-space consistent.
\end{lemma}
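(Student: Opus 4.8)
The plan is to prove the two implications separately, relying throughout on two elementary consequences of Definition~\ref{def:SBP}. Setting $j = 0$ in (A) gives $D_+\vec{1} = D_+\vec{x}^0 = \vec{0}$ and $\vec{p}_0^\top\vec{1} = \vec{p}_0^\top\vec{x}^0 = a^0 = 1$. The first shows that $\vspan{\vec{1}} \subseteq \ker{D_+}$ for \emph{every} SBP operator, so that nullspace consistency is precisely the statement that this inclusion is an equality; the second shows in particular that $\vec{p}_0 \neq \vec{0}$. I would use the first fact in the converse and both facts in the forward direction.

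For the direction ``nullspace consistent $\Rightarrow$ $\tilde{D}_+$ invertible'' I would show that $\ker{\tilde{D}_+} = \{\vec{0}\}$. Let $\vec{v}$ satisfy $\tilde{D}_+\vec{v} = \vec{0}$, i.e. $D_+\vec{v} = -H^{-1}\vec{p}_0(\vec{p}_0^\top\vec{v})$. Multiplying on the left by $\vec{v}^\top H$ and using $H = H^\top$ yields $\vec{v}^\top H D_+\vec{v} = -(\vec{p}_0^\top\vec{v})^2$. The idea is then to evaluate the same scalar through the SBP identity (C): since $\vec{v}^\top H D_+\vec{v}$ is its own transpose, (C) gives $2\vec{v}^\top H D_+\vec{v} = -(\vec{p}_0^\top\vec{v})^2 + (\vec{p}_n^\top\vec{v})^2 + \vec{v}^\top S\vec{v}$. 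Substituting the previous identity and rearranging produces
\begin{equation*}
-(\vec{p}_0^\top\vec{v})^2 = (\vec{p}_n^\top\vec{v})^2 + \vec{v}^\top S\vec{v}.
\end{equation*}
The left-hand side is nonpositive while the right-hand side is nonnegative because $S \geq 0$; hence both vanish, and in particular $\vec{p}_0^\top\vec{v} = 0$. Feeding this back into $\tilde{D}_+\vec{v} = \vec{0}$ leaves $D_+\vec{v} = \vec{0}$, so $\vec{v} \in \ker{D_+} = \vspan{\vec{1}}$ by nullspace consistency, say $\vec{v} = c\vec{1}$. Then $0 = \vec{p}_0^\top\vec{v} = c\,\vec{p}_0^\top\vec{1} = c$, forcing $\vec{v} = \vec{0}$.

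For the converse I would argue by contraposition: assuming $D_+$ is \emph{not} nullspace consistent, I would exhibit a nonzero vector in $\ker{\tilde{D}_+}$. By the preliminary remark the failure of consistency means $\vspan{\vec{1}} \subsetneq \ker{D_+}$, so $\dim\ker{D_+} \geq 2$. Restricting the single linear condition $\vec{p}_0^\top\vec{v} = 0$ to $\ker{D_+}$ therefore cuts out a subspace of dimension at least one, so there is a nonzero $\vec{v} \in \ker{D_+}$ with $\vec{p}_0^\top\vec{v} = 0$. For this $\vec{v}$ one has $\tilde{D}_+\vec{v} = D_+\vec{v} + H^{-1}\vec{p}_0(\vec{p}_0^\top\vec{v}) = \vec{0}$, so $\tilde{D}_+$ is singular.

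I expect the energy estimate in the forward direction to be the crux of the argument. The rank-one update $H^{-1}\vec{p}_0\vec{p}_0^\top$ is added to the \emph{singular} matrix $D_+$, so no standard determinant-update identity applies directly, and it is precisely the SBP relation (C) together with $S \geq 0$ that forces any candidate null vector to satisfy $\vec{p}_0^\top\vec{v} = 0$ and thereby collapse onto $\ker{D_+}$. The converse, by contrast, is a pure dimension count once the preliminary facts are in place.
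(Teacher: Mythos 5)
Your proof is correct: the energy identity obtained from property (C) forces any null vector $\vec{v}$ of $\tilde{D}_+$ to satisfy $\vec{p}_0^\top \vec{v} = 0$ and hence collapse onto $\ker{D_+}$, and the dimension count handles the converse. The paper does not reproduce a proof of this lemma (it defers to a cited reference), but your argument is the standard one used there and is the same quadratic-form technique the paper itself deploys in the later lemmas of Section~\ref{sec:Perturbations}.
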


Unfortunately, nullspace consistency does not follow from Definition \ref{def:SBP}; counterexamples are given in \cite{ranocha2019some,linders2020properties}. Yet, an even stronger condition is frequently needed:

\begin{definition} \label{def:eigenvalue_property}
An SBP operator is said to have the eigenvalue property if each eigenvalue of $\tilde{D}_+$ has positive real part.
\end{definition}

Many important results in the theory of SBP operators rely on the eigenvalue property. The following list is by no means complete.
\begin{itemize}
    \item The proof that spatial discretizations using (classical) finite difference SBP methods converge with order $q+1$ for hyperbolic problems and order $q+2$ for parabolic problems assumes the eigenvalue property \cite{svard2006order}.
    \item The proof that functional estimates from SBP methods in two or more dimensions are superconvergent requires the eigenvalue property \cite{hicken2011superconvergent}.
    \item SBP methods used for time discretization of partial differential equations need the eigenvalue property to ensure uniqueness of solutions \cite{nordstrom2013summation}. Proofs of convergence for linear and certain nonlinear problems also rely on the property; see \cite{linders2020properties} for an overview.
\end{itemize}

The eigenvalue property is evidently very important and it is therefore of interest to know which SBP operators possess it. Nullspace consistency is of course necessary. This bids the question: Is nullspace consistency sufficient for the eigenvalue property? Unfortunately, this is not the case.

\begin{theorem}
There are nullspace consistent SBP operators that lack the eigenvalue property.
\end{theorem}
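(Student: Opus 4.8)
The plan is to exploit the algebraic structure of $\tilde{D}_+$ to pin down exactly how the eigenvalue property can fail, and then to exhibit a concrete operator realizing that failure. First I would compute the $H$-symmetric part of $\tilde{D}_+$. Using $H\tilde{D}_+ = HD_+ + \vec{p}_0\vec{p}_0^\top$ together with (C), a short calculation gives
\begin{equation*}
H\tilde{D}_+ + \tilde{D}_+^\top H = \vec{p}_0\vec{p}_0^\top + \vec{p}_n\vec{p}_n^\top + S \geq 0.
\end{equation*}
Since $H = H^\top > 0$, for any eigenpair $\tilde{D}_+\vec{v} = \lambda\vec{v}$ with $\vec{v} \neq \vec{0}$ one obtains $2\Re{\lambda}\,\vec{v}^* H \vec{v} = \vec{v}^*(H\tilde{D}_+ + \tilde{D}_+^\top H)\vec{v} \geq 0$, hence $\Re{\lambda} \geq 0$. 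By Lemma~\ref{lemma:nullspace_consistency}, nullspace consistency makes $\tilde{D}_+$ invertible, so $\lambda = 0$ is excluded. Thus the eigenvalue property fails for a nullspace consistent operator precisely when $\tilde{D}_+$ possesses a nonzero purely imaginary eigenvalue $\lambda = \iu\omega$, $\omega \neq 0$. For such an eigenvalue the inequality above is an equality, which by positive semi-definiteness forces $\vec{p}_0^\top\vec{v} = \vec{p}_n^\top\vec{v} = 0$ and $S\vec{v} = \vec{0}$; consequently $H^{-1}\vec{p}_0\vec{p}_0^\top\vec{v} = \vec{0}$ and $\vec{v}$ is in fact a purely imaginary eigenvector of $D_+$ itself.

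This reduces the theorem to constructing a single nullspace consistent SBP operator $D_+$ admitting such a vector. I would search within a low-dimensional family, keeping the number of nodes $n+1$ as small as the constraints allow, since a genuine complex-conjugate eigenpair orthogonal to $\vec{p}_0$ and $\vec{p}_n$ requires at least two free interior degrees of freedom. Treating the free entries of $D_+$, $H$, $S$, $\vec{p}_0$ and $\vec{p}_n$ as unknowns, I would impose the accuracy and compatibility conditions (A)--(E) of Definition~\ref{def:SBP} for some modest order $q \geq 1$, and simultaneously require that a prescribed vector $\vec{v}$ with $\vec{p}_0^\top\vec{v} = \vec{p}_n^\top\vec{v} = 0$ be an eigenvector of $D_+$ with eigenvalue $\iu\omega$. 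Once a consistent solution is found, nullspace consistency is checked directly by confirming that $\ker{D_+} = \vspan{\vec{1}}$, and the failure of the eigenvalue property follows from the presence of $\iu\omega$ in the spectrum of $\tilde{D}_+$.

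The main obstacle is the tension between the two requirements. The accuracy relations in (A) tie up much of the freedom in $D_+$ and $H$, and forcing a purely imaginary eigenvalue is a codimension condition on the remaining parameters; it is not obvious a priori that enough freedom survives, nor that the resulting operator avoids additional spurious null vectors that would destroy nullspace consistency. I therefore expect the decisive step to be an explicit small example---most naturally an upwind or generalized SBP operator, where the extra latitude afforded by $S \neq 0$ or by non-canonical $\vec{p}_0$, $\vec{p}_n$ enlarges the feasible set---followed by a direct verification of its spectrum and nullspace.
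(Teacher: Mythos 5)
Your structural analysis is correct and useful: the identity $H\tilde{D}_+ + \tilde{D}_+^\top H = \vec{p}_0\vec{p}_0^\top + \vec{p}_n\vec{p}_n^\top + S \geq 0$ does show that all eigenvalues have nonnegative real part, that nullspace consistency excludes $\lambda = 0$, and that a failure of the eigenvalue property is equivalent to a nonzero purely imaginary eigenvalue whose eigenvector is annihilated by $\vec{p}_0^\top$, $\vec{p}_n^\top$ and $S$. This is exactly the content of Lemma~\ref{lemma:zeros}, which the paper already has at its disposal. However, the theorem is an existence statement, and your argument stops precisely where the proof has to begin: you describe a search strategy ("I would search within a low-dimensional family\dots", "I therefore expect the decisive step to be an explicit small example") but never produce the witness, and you yourself concede that "it is not obvious a priori that enough freedom survives." Without either an explicit operator or a rigorous dimension-counting argument showing the feasible set is nonempty, nothing has been proven. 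That is a genuine gap, not a stylistic one.

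The paper closes this gap by simply exhibiting a $6\times 6$ operator of order $q=1$ on the equispaced nodes $\vec{x}^\top = (-5,-3,-1,1,3,5)/2$ with $S=0$, $\vec{p}_0=\vec{e}_0$, $\vec{p}_n=\vec{e}_n$ and diagonal $H = \diag{1/2,1,1,1,1,1/2}$, for which $\tilde{D}_+$ is invertible (hence the operator is nullspace consistent by Lemma~\ref{lemma:nullspace_consistency}) yet has the eigenvalues $\pm\iu/\sqrt{5}$ with explicitly given eigenvectors. Note in particular that your expectation that the extra latitude of upwind ($S\neq 0$) or generalized (non-canonical $\vec{p}_0$, $\vec{p}_n$) operators would be needed is not borne out: the counterexample lives in the classical setting. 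To complete your proof you would need to carry out the construction you outline and verify, as the paper does, both the kernel condition and the presence of an imaginary eigenvalue for a concrete matrix.
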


\begin{proof}
Consider
$$
D_+ = \frac{1}{5}
\begin{pmatrix}
    -5 & \phantom{-}4 & \phantom{-}2 & \phantom{-}0 & -2 & \phantom{-}1 \\
    -2 & \phantom{-}0 & \phantom{-}1 & \phantom{-}0 & \phantom{-}2 & -1 \\
    -1 & -1 & \phantom{-}0 & \phantom{-}2 & \phantom{-}0 & \phantom{-}0 \\
     \phantom{-}0 & \phantom{-}0 & -2 & \phantom{-}0 & \phantom{-}1 & \phantom{-}1 \\
    \phantom{-}1 & -2 & \phantom{-}0 & -1 & \phantom{-}0 & \phantom{-}2 \\
    -1 & \phantom{-}2 & \phantom{-}0 & -2 & -4 & \phantom{-}5
\end{pmatrix},
$$
defined on the nodes $\vec{x}^\top = (-5,-3,-1,1,3,5)/2$. This is an SBP operator of order $q=1$ with $S=0$, $\vec{p}_0 = \vec{e}_0$, $\vec{p}_n = \vec{e}_n$ and $H = \diag{1/2,1,1,1,1,1/2}$. The matrix $\tilde{D}_+ = D_+ + H^{-1} \vec{p}_0 \vec{p}_0^\top$ is invertible, hence the operator is nullspace consistent by Lemma~\ref{lemma:nullspace_consistency}. However, two of the eigenvalues of $\tilde{D}_+$ and their corresponding eigenvectors are
\begin{equation*} \label{eq:example_eigenvalues}
\begin{aligned}
    \lambda_{\pm} &= \pm \frac{\iu}{\sqrt{5}}, \\
    \vec{w}_{\pm} &= (0,1,-3,3,-1,0)^\top \pm \iu \sqrt{5} (0,1,-1,-1,1,0)^\top.
\end{aligned}
\end{equation*}
The real parts of these eigenvalues are non-positive, hence the SBP operator does not have the eigenvalue property.
\end{proof}


\section{Perturbed SBP operators} \label{sec:Perturbations}

In this section we consider an SBP operator $D_+$ that is nullspace consistent but lack the eigenvalue property, i.e. it satisfies Definition~\ref{def:nullspace_consistent} but not Definition~\ref{def:eigenvalue_property}. It will be shown that the eigenvalue property can be reclaimed by adding an appropriately designed perturbation $S'$ to $D_+$. More precisely, the eigenvalue property is enforced by constructing a new operator $D_+' = D_+ + \frac{1}{2}H^{-1}S'$, which adheres to Definition \ref{def:SBP} so long as $S'$ is symmetric positive semi-definite and satisfies \eqref{eq:S_conditions}. In this case, $S$ is replaced by $S + S'$ in Definition \ref{def:SBP}. The purpose of $S'$ is to push problematic eigenvalues into the right half-plane while leaving the remaining eigenvalues untouched. Furthermore, the matrix $H$ and the vectors $\vec{p}_0$ and $\vec{p}_n$ are left unperturbed. To achieve this, it is first necessary to establish certain properties of the problematic eigenvalues and eigenvectors.

The starting point is the following simple result. For a proof, see e.g. \cite[Lemma 1]{linders2020properties}:

\begin{lemma} \label{lemma:zeros}
Consider the SBP operator $D_+$ and let $(\lambda, \vec{w})$ be an eigenpair of $\tilde{D}_+ = D_+ + H^{-1} \vec{p}_0 \vec{p}_0^\top$, i.e. $\tilde{D}_+ \vec{w} = \lambda \vec{w}$. Then $\operatorname{Re}(\lambda) \geq 0$ with equality if and only if $\vec{p}_0^\top \vec{w} = \vec{p}_n^\top \vec{w} = 0$ and $S \vec{w} = \vec{0}$.
\end{lemma}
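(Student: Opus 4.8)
The plan is to form a scalar energy identity from the eigenvalue equation and extract its real part, which exposes the symmetric part $HD_+ + D_+^\top H$ so that relation (C) can be applied directly. Since an eigenvector of the real but nonsymmetric matrix $\tilde{D}_+$ may be complex, I would work with the conjugate transpose throughout and use that $\|\vec{w}\|_H^2 = \vec{w}^* H \vec{w} > 0$ for $\vec{w} \neq \vec{0}$ by (B).

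First I would left-multiply $\tilde{D}_+ \vec{w} = \lambda \vec{w}$ by $\vec{w}^* H$ to obtain $\lambda \|\vec{w}\|_H^2 = \vec{w}^* H D_+ \vec{w} + |\vec{p}_0^\top \vec{w}|^2$, where the final term arises from $\vec{w}^* H (H^{-1}\vec{p}_0 \vec{p}_0^\top)\vec{w} = |\vec{p}_0^\top \vec{w}|^2$ using that $\vec{p}_0$ is real. Adding this identity to its conjugate transpose and using $H = H^\top$ yields $2\Re{\lambda}\|\vec{w}\|_H^2 = \vec{w}^*(HD_+ + D_+^\top H)\vec{w} + 2|\vec{p}_0^\top \vec{w}|^2$.

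Then I would substitute relation (C). The $-\vec{p}_0 \vec{p}_0^\top$ contribution cancels one of the two positive $|\vec{p}_0^\top \vec{w}|^2$ terms, leaving the clean identity $2\Re{\lambda}\|\vec{w}\|_H^2 = |\vec{p}_0^\top \vec{w}|^2 + |\vec{p}_n^\top \vec{w}|^2 + \vec{w}^* S \vec{w}$. Every term on the right is nonnegative — the first two manifestly, and the third because $S = S^\top \geq 0$ — while $\|\vec{w}\|_H^2 > 0$, so $\Re{\lambda} \geq 0$ follows at once.

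For the equality case, $\Re{\lambda} = 0$ forces all three nonnegative terms to vanish. The first two give $\vec{p}_0^\top \vec{w} = \vec{p}_n^\top \vec{w} = 0$. For the third, the only genuine subtlety is the standard fact that for a real positive semi-definite $S$ and a complex vector $\vec{w}$, factoring $S = L^\top L$ gives $\vec{w}^* S \vec{w} = \|L\vec{w}\|^2$, so $\vec{w}^* S \vec{w} = 0$ is equivalent to $L\vec{w} = \vec{0}$ and hence to $S\vec{w} = \vec{0}$; the converse is immediate. This gives the stated characterization, and I expect the complex-conjugation bookkeeping and this positive-semi-definite equality step to be the only places requiring care.
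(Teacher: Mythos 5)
Your proof is correct, and it is the standard energy argument: contracting the eigenvalue relation with $\vec{w}^* H$, symmetrizing, and invoking relation (C) to obtain $2\Re{\lambda}\|\vec{w}\|_H^2 = |\vec{p}_0^\top \vec{w}|^2 + |\vec{p}_n^\top \vec{w}|^2 + \vec{w}^* S \vec{w}$ is precisely the route taken in the reference the paper cites for this lemma. The complex-conjugation bookkeeping and the positive semi-definite equality step $\vec{w}^* S \vec{w} = 0 \Leftrightarrow S\vec{w} = \vec{0}$ are both handled correctly.
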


Lemma~\ref{lemma:zeros} shows that $\tilde{D}_+$ has an imaginary eigenvalue. By Lemma~\ref{lemma:nullspace_consistency} this eigenvalue is non-zero since $D_+$ is nullspace consistent by assumption. In fact, $\tilde{D}_+$ has an even number of non-zero imaginary eigenvalues since the conjugate $(\overline{\lambda}, \overline{\vec{w}})$ is also an eigenpair of $\tilde{D}_+$.

At this point, two lemmas will be established that are key to the construction of $S'$. Recall that an eigenvalue $\lambda$ of $\tilde{D}_+$ is \emph{normal} if
\begin{enumerate}[label=(\alph*)]
\item every eigenvector of $\tilde{D}_+$ corresponding to $\lambda$ is orthogonal to every eigenvector of $\tilde{D}_+$ corresponding to each eigenvalue different from $\lambda$, and
\item the algebraic and geometric multiplicities of $\lambda$ are equal.
\end{enumerate}
%

\begin{remark}
Herein, the inner product of two vectors $\vec{f}$ and $\vec{g}$ are taken to be $\inner{\vec{f}}{\vec{g}} = \vec{f}^* H \vec{g}$, where the superscript $*$ indicates conjugate transposition. Orthogonality should thus be understood with this inner product in mind. The norm $\| \vec{f} \|_H := \inner{\vec{f}}{\vec{f}}^\frac{1}{2}$ will also be used.
\end{remark}

\begin{lemma} \label{lemma:normal}
Let $(\lambda, \vec{w})$ be an eigenpair of $\tilde{D}_+$ with $\operatorname{Re}(\lambda) = 0$. Then $\lambda$ is a normal eigenvalue with respect to the inner product $\inner{\cdot}{\cdot}$.
\end{lemma}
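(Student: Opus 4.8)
The plan is to work with the adjoint of $\tilde{D}_+$ in the inner product $\inner{\cdot}{\cdot}$, namely $\tilde{D}_+^\dagger := H^{-1}\tilde{D}_+^\top H$, which satisfies $\inner{\tilde{D}_+^\dagger \vec{f}}{\vec{g}} = \inner{\vec{f}}{\tilde{D}_+ \vec{g}}$ for all $\vec{f},\vec{g}$ because $H$ is real and symmetric. The first step is to identify the $H$-symmetric part of $\tilde{D}_+$. Adding $2\vec{p}_0\vec{p}_0^\top$ to relation (C) of Definition~\ref{def:SBP} gives $H\tilde{D}_+ + \tilde{D}_+^\top H = \vec{p}_0\vec{p}_0^\top + \vec{p}_n\vec{p}_n^\top + S$; multiplying by $H^{-1}$ yields the operator identity $\tilde{D}_+ + \tilde{D}_+^\dagger = H^{-1}M$, where $M := \vec{p}_0\vec{p}_0^\top + \vec{p}_n\vec{p}_n^\top + S$ is symmetric positive semi-definite.

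The crucial observation is that every eigenvector belonging to a purely imaginary eigenvalue is simultaneously an eigenvector of the adjoint. Let $(\lambda,\vec{w})$ be an eigenpair with $\operatorname{Re}(\lambda)=0$. By Lemma~\ref{lemma:zeros} one has $\vec{p}_0^\top\vec{w} = \vec{p}_n^\top\vec{w} = 0$ and $S\vec{w} = \vec{0}$, hence $M\vec{w} = \vec{0}$. Applying the identity above to $\vec{w}$ gives $\tilde{D}_+^\dagger\vec{w} = H^{-1}M\vec{w} - \tilde{D}_+\vec{w} = -\lambda\vec{w} = \overline{\lambda}\vec{w}$, using $-\lambda = \overline{\lambda}$ for purely imaginary $\lambda$. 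I expect this to be the one genuinely load-bearing step; everything else is a consequence of standard adjoint-eigenvector reasoning, so the difficulty lies entirely in recognizing that $M$ annihilates $\vec{w}$.

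With the relation $\tilde{D}_+^\dagger\vec{w} = \overline{\lambda}\vec{w}$ in hand, property (a) follows directly. If $(\mu,\vec{v})$ is an eigenpair with $\mu\neq\lambda$, then
\[
\lambda\inner{\vec{w}}{\vec{v}} = \inner{\overline{\lambda}\vec{w}}{\vec{v}} = \inner{\tilde{D}_+^\dagger\vec{w}}{\vec{v}} = \inner{\vec{w}}{\tilde{D}_+\vec{v}} = \mu\inner{\vec{w}}{\vec{v}},
\]
so $(\lambda-\mu)\inner{\vec{w}}{\vec{v}} = 0$ forces $\inner{\vec{w}}{\vec{v}} = 0$, giving orthogonality of $\vec{w}$ to all eigenvectors of other eigenvalues.

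For property (b) the plan is to rule out Jordan chains of length greater than one, i.e. to show $\ker{(\tilde{D}_+ - \lambda I)^2} = \ker{\tilde{D}_+ - \lambda I}$. Take $\vec{u}$ with $(\tilde{D}_+ - \lambda I)^2\vec{u} = \vec{0}$ and set $\vec{w} := (\tilde{D}_+ - \lambda I)\vec{u}$, which is either zero or a genuine eigenvector for $\lambda$ and so satisfies $\tilde{D}_+^\dagger\vec{w} = \overline{\lambda}\vec{w}$. Then
\[
\|\vec{w}\|_H^2 = \inner{\vec{w}}{(\tilde{D}_+ - \lambda I)\vec{u}} = \inner{(\tilde{D}_+^\dagger - \overline{\lambda} I)\vec{w}}{\vec{u}} = \inner{\vec{0}}{\vec{u}} = 0,
\]
so $\vec{w} = \vec{0}$, whence $\vec{u} \in \ker{\tilde{D}_+ - \lambda I}$. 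This equality of kernels means the algebraic and geometric multiplicities of $\lambda$ coincide. Together, (a) and (b) show that $\lambda$ is normal with respect to $\inner{\cdot}{\cdot}$.
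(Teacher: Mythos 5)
Your proof is correct. For part (a) it is essentially the paper's argument in different clothing: the paper computes $\mu\inner{\vec{w}}{\vec{v}} = \vec{w}^* H\tilde{D}_+\vec{v}$ directly from the identity $H\tilde{D}_+ = \vec{p}_0\vec{p}_0^\top + \vec{p}_n\vec{p}_n^\top + S - \tilde{D}_+^\top H$ and kills the first three terms with Lemma~\ref{lemma:zeros}, which is exactly your observation that $M\vec{w}=\vec{0}$ and hence $\tilde{D}_+^\dagger\vec{w}=\overline{\lambda}\vec{w}$, just not phrased through the adjoint. Where you genuinely depart from the paper is part (b): the paper does not argue directly at all, but instead observes that $X=H$ solves the Lyapunov-type inequality $X\tilde{D}_+ + \tilde{D}_+^\top X \geq 0$ and then invokes an inertia theorem of Carlson to conclude that the elementary divisors of imaginary eigenvalues are linear. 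You replace this citation with a short, self-contained Jordan-chain argument: if $(\tilde{D}_+-\lambda I)^2\vec{u}=\vec{0}$ then $\vec{w}=(\tilde{D}_+-\lambda I)\vec{u}$ satisfies $\|\vec{w}\|_H^2 = \inner{(\tilde{D}_+^\dagger-\overline{\lambda}I)\vec{w}}{\vec{u}} = 0$, so $\ker{(\tilde{D}_+-\lambda I)^2}=\ker{\tilde{D}_+-\lambda I}$. This is correct (and note it quietly reuses the key fact that $\vec{w}$, being itself an imaginary eigenvector when nonzero, satisfies $\tilde{D}_+^\dagger\vec{w}=\overline{\lambda}\vec{w}$). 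What your route buys is the elimination of the external inertia-theorem reference and a unified treatment of (a) and (b) from the single adjoint relation; what the paper's route buys is brevity and a pointer to a more general structural result about matrices admitting such a positive definite $X$.
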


\begin{proof}
The two properties (a) and (b) must be established. For (a), let $(\mu, \vec{v})$ be any eigenpair of $\tilde{D}_+$ with $\mu \neq \lambda$ and note that $\mu \inner{\vec{w}}{\vec{v}} = \vec{w}^* H \tilde{D}_+ \vec{v}$. Using (C) from Definition \ref{def:SBP} and the definition of $\tilde{D}_+$ in \eqref{def:D_tilde} it follows that
\begin{equation} \label{eq:D_tilde}
    H \tilde{D}_+ = \vec{p}_0 \vec{p}_0^\top + \vec{p}_n \vec{p}_n^\top + S - \tilde{D}_+^\top H.
\end{equation}
Thus,
\begin{align*}
    \mu \inner{\vec{w}}{\vec{v}} &= \underbrace{(\vec{p}_0^\top \vec{w})^* \vec{p}_0^\top \vec{v} + (\vec{p}_n^\top \vec{w})^* \vec{p}_n^\top \vec{v} + (S \vec{w})^* \vec{v}}_{=0 \text{ by Lemma \ref{lemma:zeros}}} - (\tilde{D}_+ \vec{w})^* H \vec{v} \\
    &= - \overline{\lambda} \vec{w}^* H \vec{v} = \lambda \inner{\vec{w}}{\vec{v}}.
\end{align*}
In the final equality, $\overline{\lambda} = -\lambda$ has been used, which holds since $\lambda$ is imaginary. Consequently, $(\mu - \lambda) \inner{\vec{w}}{\vec{v}} = 0$, however since $\mu \neq \lambda$ by assumption, the sought orthogonality follows.

For (b), consider the following problem: Find a symmetric positive definite matrix $X \in \mathbf{R}^{(n+1) \times (n+1)}$ such that $X \tilde{D}_+ + \tilde{D}_+^\top X \geq 0$. From \eqref{eq:D_tilde} it is seen that $X=H$ solves this problem. However, the existence of a solution implies that all elementary divisors of imaginary eigenvalues of $\tilde{D}_+$ are linear \cite[Corollary 2]{carlson1963inertia}, which is equivalent to the stated assertion on the algebraic and geometric multiplicities \cite[Chapter 7]{lancaster1985theory}.
\end{proof}

\begin{lemma} \label{lemma:grid_orthogonal}
Let $(\lambda, \vec{w})$ be an eigenpair of $\tilde{D}_+$ with $\operatorname{Re}(\lambda) = 0$. Then $\inner{\vec{x}^j}{\vec{w}} = 0$ for $j=0,\dots,q$.
\end{lemma}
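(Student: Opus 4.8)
The plan is to evaluate the scalar $\inner{\vec{x}^j}{\tilde{D}_+ \vec{w}}$ in two different ways and equate the results, thereby extracting a recursion in $j$. On one hand, since $\tilde{D}_+ \vec{w} = \lambda \vec{w}$, linearity of the inner product in its second slot immediately gives $\inner{\vec{x}^j}{\tilde{D}_+ \vec{w}} = \lambda \inner{\vec{x}^j}{\vec{w}}$. On the other hand, I would transfer $\tilde{D}_+$ onto the first argument using the adjoint identity \eqref{eq:D_tilde}, namely $H \tilde{D}_+ = \vec{p}_0 \vec{p}_0^\top + \vec{p}_n \vec{p}_n^\top + S - \tilde{D}_+^\top H$, writing $\inner{\vec{x}^j}{\tilde{D}_+ \vec{w}} = (\vec{x}^j)^\top H \tilde{D}_+ \vec{w}$ and substituting.

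This substitution produces four terms. Because $\operatorname{Re}(\lambda) = 0$, Lemma \ref{lemma:zeros} supplies $\vec{p}_0^\top \vec{w} = \vec{p}_n^\top \vec{w} = 0$ and $S \vec{w} = \vec{0}$, so the three contributions carrying $\vec{p}_0 \vec{p}_0^\top$, $\vec{p}_n \vec{p}_n^\top$ and $S$ vanish. The surviving term is $-(\tilde{D}_+ \vec{x}^j)^\top H \vec{w}$. Here I would use accuracy: from (A) together with \eqref{def:D_tilde} one has $\tilde{D}_+ \vec{x}^j = j \vec{x}^{j-1} + a^j H^{-1} \vec{p}_0$, and inserting this, while exploiting once more that $\vec{p}_0^\top \vec{w} = 0$ to kill the stray $H^{-1}\vec{p}_0$ piece, collapses the term to $-j \inner{\vec{x}^{j-1}}{\vec{w}}$.

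Equating the two evaluations yields the recursion $\lambda \inner{\vec{x}^j}{\vec{w}} = -j \inner{\vec{x}^{j-1}}{\vec{w}}$ for $j = 0, \dots, q$. Since $D_+$ is nullspace consistent, Lemma \ref{lemma:nullspace_consistency} makes $\tilde{D}_+$ invertible, so the imaginary eigenvalue $\lambda$ is nonzero and can be divided out. The case $j = 0$ reads $\lambda \inner{\vec{1}}{\vec{w}} = 0$, giving the base case $\inner{\vec{1}}{\vec{w}} = 0$, and a straightforward induction on $j$ then propagates this to $\inner{\vec{x}^j}{\vec{w}} = 0$ for every $j = 0, \dots, q$.

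The computation is elementary, so I expect no serious analytic obstacle; the care needed is entirely in the bookkeeping of the adjoint manipulation — matching the boundary and dissipation terms to precisely the quantities annihilated by Lemma \ref{lemma:zeros}, and noting that the interpolation relation $\vec{p}_0^\top \vec{x}^j = a^j$ from (A) is what disposes of the $H^{-1}\vec{p}_0$ term. The one conceptual point is to recognize that the statement follows from a recursion rather than a single identity, and that the nonvanishing of $\lambda$ guaranteed by nullspace consistency is exactly what allows that recursion to be run.
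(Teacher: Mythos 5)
Your proof is correct and follows essentially the same route as the paper: derive the recursion $\lambda \inner{\vec{x}^j}{\vec{w}} = -j\inner{\vec{x}^{j-1}}{\vec{w}}$ by moving $\tilde{D}_+$ onto the polynomial argument, kill the boundary/dissipation terms via Lemma~\ref{lemma:zeros}, and induct using $\lambda \neq 0$ from nullspace consistency. The only (immaterial) difference is that the paper transposes via property (D), i.e.\ $H\tilde{D}_+ = \vec{p}_n\vec{p}_n^\top - D_-^\top H$, and invokes the accuracy of $D_-$, whereas you use \eqref{eq:D_tilde} from property (C) and the accuracy of $D_+$ together with $\vec{p}_0^\top \vec{x}^j = a^j$; both collapse to the same recursion.
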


\begin{proof}
Consider first the case $j=0$, for which we have $\vec{x}^0 = \vec{1}$, and note that $\lambda \inner{\vec{1}}{\vec{w}} = \vec{1}^\top H \tilde{D}_+ \vec{w}$. From Definition \ref{def:SBP} and \eqref{def:D_tilde} it follows that
\begin{equation} \label{eq:D_tilde_D}
    H \tilde{D}_+ = \vec{p}_n \vec{p}_n^\top - D_-^\top H.
\end{equation}
Thus,
\begin{equation*}
    \lambda \inner{\vec{1}}{\vec{w}} = \underbrace{\vec{1}^\top \vec{p}_n (\vec{p}_n^\top \vec{w})}_{=0 \text{ by Lemma \ref{lemma:zeros}}} - \underbrace{(D_- \vec{1})^\top H \vec{w}}_{=0 \text{ by Definition \ref{def:SBP}}} = 0.
\end{equation*}
Since $D_+$ is nullspace consistent by assumption, Lemma~\ref{lemma:nullspace_consistency} ensures that $\lambda \neq 0$. It follows that $\inner{\vec{1}}{\vec{w}} = 0$, hence the claim holds when $j=0$.

Next, suppose that the claim holds for some $j-1 < q$. Then, similarly,
\begin{align*}
    \lambda \inner{\vec{x}^j}{\vec{w}} &= \underbrace{(\vec{x}^j)^\top \vec{p}_n (\vec{p}_n^\top \vec{w})}_{=0 \text{ by Lemma \ref{lemma:zeros}}} - (D_- \vec{x}^j)^\top H \vec{w} \\
    &= -j (\vec{x}^{j-1})^\top H \vec{w} = -j \inner{\vec{x}^{j-1}}{\vec{w}} = 0,
\end{align*}
where the induction hypothesis implies the final equality. From nullspace consistency it follows that $\inner{\vec{x}^j}{\vec{w}}=0$, and the claim holds by induction.
\end{proof}

Using Lemmas \ref{lemma:normal} and \ref{lemma:grid_orthogonal}, the main result of this section can be proven:

\begin{theorem} \label{thm:perturbed_SBP}
For any SBP operator $D_+$ of order $q$ that satisfies Definition~\ref{def:nullspace_consistent} but not Definition~\ref{def:eigenvalue_property}, there is another SBP operator $D'_+ = D_+ + \frac{1}{2} H^{-1} S'$ of order $q$ that satisfies both definitions. Further,  $D_+'$ can be constructed such that $\| D_+' - D_+ \| \leq \varepsilon$ for any $\varepsilon > 0$, where the norm $\| \cdot \|$ is arbitrary.
\end{theorem}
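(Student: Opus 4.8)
The plan is to construct the perturbation explicitly as $S' = \delta M$, where $\delta > 0$ is a scaling parameter that controls the size of the perturbation and $M$ is a fixed symmetric positive semi-definite matrix tailored to the problematic eigenvectors. Denote by $\iu\omega_1, \dots, \iu\omega_p$ (with $\omega_k \neq 0$) the distinct eigenvalues of $\tilde{D}_+$ on the imaginary axis; these exist and are nonzero by Lemmas~\ref{lemma:zeros} and \ref{lemma:nullspace_consistency}. Writing each associated eigenvector as $\vec{w} = \vec{a} + \iu \vec{b}$, let $W_{\mathbb{R}}$ be the real span of all such real and imaginary parts $\vec{a},\vec{b}$, and let $V = (\vec{v}_1, \dots, \vec{v}_r)$ collect a real basis of $W_{\mathbb{R}}$. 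Lemma~\ref{lemma:grid_orthogonal} gives $\inner{\vec{x}^j}{\vec{w}} = 0$, so each $\vec{v}_\ell$ is $H$-orthogonal to every monomial, i.e. $V^\top H \vec{x}^j = \vec{0}$. I would then set $M = H V V^\top H$. This $M$ is symmetric and positive semi-definite, since $\vec{z}^* M \vec{z} = \| V^\top H \vec{z} \|^2 \geq 0$; moreover $M \vec{x}^j = H V (V^\top H \vec{x}^j) = \vec{0}$ for $j = 0, \dots, q$. Hence $S' = \delta M$ satisfies \eqref{eq:S_conditions} and is symmetric positive semi-definite, so $D_+' = D_+ + \tfrac12 H^{-1} S'$ is an SBP operator of order $q$ (with $S$ replaced by $S + S'$) for every $\delta > 0$; in particular property (A), and thus the order, is preserved.

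Next I would analyze the eigenvalues of $\tilde{D}_+' = \tilde{D}_+ + \tfrac12 H^{-1} S'$. The key observation is that for an imaginary eigenpair $(\iu\omega, \vec{w})$, equation \eqref{eq:D_tilde} together with the vanishing conditions of Lemma~\ref{lemma:zeros} yields $\tilde{D}_+^\top H \vec{w} = -\iu\omega H \vec{w}$, whence $\vec{w}^* H$ is a \emph{left} eigenvector of $\tilde{D}_+$ for the eigenvalue $\iu\omega$. By Lemma~\ref{lemma:normal}, $\iu\omega$ is semisimple and its eigenvectors are $H$-orthogonal to the remaining (generalized) eigenspaces, so the vectors $\vec{w}^* H$ furnish exactly the bi-orthogonal left eigenvectors required by the perturbation theory of semisimple eigenvalues. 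Choosing an $H$-orthonormal basis $\vec{w}_1, \dots, \vec{w}_m$ of the eigenspace of $\iu\omega$, the first-order eigenvalue shifts are the eigenvalues of the $m \times m$ matrix $B$ with entries $B_{jk} = \vec{w}_j^* H \cdot \tfrac12 H^{-1} S' \vec{w}_k = \tfrac{\delta}{2} \vec{w}_j^* M \vec{w}_k = \tfrac{\delta}{2} (V^\top H \vec{w}_j)^* (V^\top H \vec{w}_k)$. Thus $B = \tfrac{\delta}{2} Y^* Y$ with $Y = V^\top H (\vec{w}_1, \dots, \vec{w}_m)$, which is Hermitian positive semi-definite; since the $\vec{w}_k$ lie in the complexification of $W_{\mathbb{R}}$ and $V^\top H$ is injective there (its Gram matrix $V^\top H V$ is positive definite), $Y$ has full column rank and $B$ is in fact positive definite. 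Consequently each eigenvalue emanating from $\iu\omega$ has the form $\iu\omega + \delta \mu_k + O(\delta^2)$ with $\mu_k > 0$ real, and so acquires strictly positive real part for all sufficiently small $\delta$.

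Finally I would assemble the conclusion by a continuity-and-smallness argument. The finitely many eigenvalues of $\tilde{D}_+$ with positive real part satisfy $\operatorname{Re}(\lambda) \geq c_0 > 0$ and depend continuously on $\delta$, so they remain in the open right half-plane for small $\delta$; simultaneously, the analysis above pushes every imaginary eigenvalue (and, by the reality of $\tilde{D}_+'$, its conjugate) strictly to the right. Hence there is a threshold $\delta^\ast > 0$ such that for $0 < \delta < \delta^\ast$ all eigenvalues of $\tilde{D}_+'$ have positive real part, i.e. $D_+'$ satisfies Definition~\ref{def:eigenvalue_property}; nullspace consistency (Definition~\ref{def:nullspace_consistent}) then follows automatically from Lemma~\ref{lemma:nullspace_consistency}, since the eigenvalue property makes $\tilde{D}_+'$ invertible. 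For the norm bound, note $\| D_+' - D_+ \| = \tfrac{\delta}{2} \| H^{-1} M \|$, so choosing $\delta = \tfrac12 \min\{ \delta^\ast, 2\varepsilon / \| H^{-1} M \| \}$ (for any fixed norm, all being equivalent in finite dimensions) yields both the eigenvalue property and $\| D_+' - D_+ \| \leq \varepsilon$.

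I expect the main obstacle to be the treatment of \emph{degenerate} imaginary eigenvalues. If such an eigenvalue possessed a nontrivial Jordan block, its perturbed eigenvalues would generically split along fractional powers of $\delta$ and could stray into the left half-plane, defeating the construction. It is precisely Lemma~\ref{lemma:normal} --- semisimplicity together with $H$-orthogonality of the eigenspaces --- that rules this out and reduces the analysis to the clean Hermitian matrix $B$. Verifying that $B$ is positive \emph{definite} rather than merely semi-definite, which hinges on the injectivity of $V^\top H$ on the complexification of $W_{\mathbb{R}}$ guaranteed through Lemma~\ref{lemma:grid_orthogonal}, is the other delicate point.
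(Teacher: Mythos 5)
Your construction of $S'$ is essentially the paper's: both build a symmetric positive semi-definite perturbation of the form "$H$ times (projector onto the span of the problematic eigenvectors) times $H$", both invoke Lemma~\ref{lemma:grid_orthogonal} to verify \eqref{eq:S_conditions} so that the order $q$ is preserved, and both lean on Lemma~\ref{lemma:normal} to control the spectrum. Where you genuinely diverge is in the spectral analysis. The paper chooses the $\vec{w}_k, \overline{\vec{w}}_k$ mutually $H$-orthogonal and weights each rank-one term individually, so that $\tfrac12 H^{-1}S'$ acts \emph{exactly} diagonally: each $\vec{w}_i$ remains an eigenvector of $\tilde{D}_+'$ with eigenvalue $\lambda_i + \tfrac{\epsilon_i}{2}\|\vec{w}_i\|_H^2$, and every eigenpair with $\Re{\lambda} \neq 0$ is exactly untouched. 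No smallness condition is needed for the eigenvalue property --- only for the norm bound. You instead take an arbitrary real basis $V$ of $W_{\mathbb{R}}$ (so $\tfrac12 H^{-1}S'$ need not act diagonally on the eigenspaces) and compensate with first-order perturbation theory for semisimple eigenvalues plus a continuity argument, which forces a threshold $\delta^\ast$ and requires the (correct, but heavier) identification of $\vec{w}^* H$ as left eigenvectors and the positive definiteness of $B = \tfrac{\delta}{2}Y^*Y$. Both arguments are sound; yours is more robust to how the basis is chosen, while the paper's is exact, elementary, and shows the stronger fact that the non-imaginary part of the spectrum is literally unchanged for \emph{any} choice of positive weights. (Had you taken $V$ to be $H$-orthonormal, $\tfrac12 H^{-1}M$ would reduce to half the $H$-orthogonal projector onto $W$, and the paper's exact computation would apply to your $M$ as well, eliminating the need for perturbation theory. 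Also, a cosmetic point: your $V$ collides with the Vandermonde matrix $V$ of Section~\ref{sec:Pseudospectral}, and your claim of an $O(\delta^2)$ remainder is slightly stronger than what semisimple perturbation theory guarantees when $B$ has repeated eigenvalues --- though $o(\delta)$ already suffices for your conclusion.)
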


\begin{proof}
Suppose that $\tilde{D}_+$ has precisely $2m$ imaginary eigenvalues, including multiplicity, and denote these $\lambda_1, \overline{\lambda}_1, \dots, \lambda_m, \overline{\lambda}_m$. Select corresponding eigenvectors $\vec{w}_1, \overline{\vec{w}}_1, \dots, \vec{w}_m, \overline{\vec{w}}_m$. By Lemma \ref{lemma:normal} each of these eigenvectors are orthogonal to any eigenvector corresponding to another eigenvalue. Since the algebraic and geometric multiplicities equal for each imaginary eigenvalue, their respective eigenspaces are complete. An orthogonal basis can be found for each space and these basis vectors can be taken as one of the listed eigenvectors. Thus, mutual orthogonality can be ensured among the eigenvectors $\vec{w}_1, \overline{\vec{w}}_1, \dots, \vec{w}_m, \overline{\vec{w}}_m$ even if some of them correspond to the same eigenvalue.

Choose $m$ positive numbers $\epsilon_1, \dots, \epsilon_m$ and construct the matrix
$$
S' = \sum_{k=1}^m \epsilon_k \left( (H \vec{w}_k) (H \vec{w}_k)^* + (H \overline{\vec{w}}_k) (H \overline{\vec{w}}_k)^* \right).
$$
Then $S'$ is real since $(H \overline{\vec{w}}_k) (H \overline{\vec{w}}_k)^* = \overline{(H \vec{w}_k) (H \vec{w}_k)^*}$. Further, $S'$ is symmetric positive semi-definite. 

At this point, construct $D'_+ = D_+ + \frac{1}{2}H^{-1}S'$. Note that
\begin{align*}
S' \vec{x}^j &= \sum_{k=1}^m \epsilon_k \left( (H \vec{w}_k) (H \vec{w}_k)^* + (H \overline{\vec{w}}_k) (H \overline{\vec{w}}_k)^* \right) \vec{x}^j \\
&= \sum_{k=1}^m \epsilon_k \left( (H \vec{w}_k) \inner{\vec{w}_k}{\vec{x}^j} + (H \overline{\vec{w}}_k) \inner{\overline{\vec{w}}_k}{\vec{x}^j} \right) = 0,
\end{align*}
for each $j=0,\dots,q$. Here, Lemma~\ref{lemma:grid_orthogonal} has been used in the final equality. Consequently, $S'$ satisfies \eqref{eq:S_conditions} such that $D'_+$ is an SBP operator of order $q$.

Consider an eigenvalue $\lambda$ of $\tilde{D}_+$. If $\operatorname{Re}(\lambda) \neq 0$, then pick any eigenvector $\vec{v}$ corresponding to $\lambda$ and note that $\tilde{D}'_+ \vec{v} = \tilde{D}_+ \vec{v} = \lambda \vec{v}$ by the orthogonality property in Lemma \ref{lemma:normal}. Thus, the eigenpair $(\lambda, \vec{v})$ is unaltered by $S'$ whenever $\operatorname{Re}(\lambda) \neq 0$. However, if $\operatorname{Re}(\lambda) = 0$ so that $\lambda = \lambda_i$ for some $i$, then pick the appropriate eigenvector $\vec{v} = \vec{w}_i$ and note that
\begin{align*}
    \tilde{D}_+' \vec{w}_i &= \tilde{D}_+ \vec{w}_i + \frac{1}{2} H^{-1} S' \vec{w}_i \\
    &= \lambda_i \vec{w}_i + \frac{\epsilon_i}{2} H^{-1} (H \vec{w}_i) (H \vec{w}_i)^* \vec{w}_i \\
    &= \left( \lambda_i + \frac{\epsilon_i}{2} \| \vec{w}_i \|_H^2 \right) \vec{w}_i,
\end{align*}
where the second equality follows from the orthogonality of the eigenvectors $\vec{w}_1, \overline{\vec{w}}_1, \dots, \vec{w}_m, \overline{\vec{w}}_m$. Thus, $\tilde{D}_+'$ has an eigenpair $(\lambda_i', \vec{w}_i)$, where $\operatorname{Re}(\lambda_i') = \frac{\epsilon_i}{2} \| \vec{w}_i \|_H^2 > 0$. By repeating this procedure for each $\lambda_i$ and $\overline{\lambda}_i$, every eigenvalue of $\tilde{D}'_+$ is accounted for. Consequently, the SBP operator $D_+'$ has the eigenvalue property.

Finally, note that the eigenvectors $\vec{w}_k$ and $\overline{\vec{w}}_k$ may be normalized as desired and that the constants $\epsilon_k$ can be chosen arbitrarily small. Thus, for any norm it is possible to find $S'$ such that for any $\varepsilon$,
$$
\| D_+' - D_+ \| = \frac{1}{2} \| H^{-1} S' \| \leq \varepsilon.
$$
\end{proof}


\section{Pseudospectral methods} \label{sec:Pseudospectral}

An important family of SBP operators consists of the pseudospectral methods. They all satisfy Definition \ref{def:SBP} with order $q=n$; see \cite{carpenter1996spectral, fernandez2014generalized}. These operators are also used in discontinuous Galerkin and flux reconstruction methods \cite{gassner2013skew,ranocha2016summation}. Given $\vec{x}$, the pseudospectral SBP operator $D_+$ is uniquely defined. To see this, note that the accuracy conditions (A) in Definition \ref{def:SBP} imply that the elements $d_{i,j}$ in the $i$th row of $D_+$ satisfy
$$
\underbrace{
\begin{pmatrix}
    1 & \dots & 1 \\
    x_0 & \dots & x_n \\
    \vdots & & \\
    x_0^n & \dots & x_n^n
\end{pmatrix}
}_{V^\top}
\begin{bmatrix}
d_{i,0} \\ d_{i,1} \\ \vdots \\ d_{i,n}
\end{bmatrix}
=
\begin{bmatrix}
0 \\ 1 \\ \vdots \\ n x_i^{n-1}
\end{bmatrix}.
$$
Note that $V$ is a Vandermonde matrix and that the numbers $x_0, \dots, x_n$ are distinct. Thus, $V^\top$ is invertible and $d_{i,0}, \dots, d_{i,n}$ are unique. The same of course holds for each row $i = 0, \dots, n$, hence $D_+$ is uniquely defined.

\begin{remark}
This argument does not imply uniqueness of $H$, $\vec{p}_0$, $\vec{p}_n$ and $S$.
\end{remark}

The goal of this section is to demonstrate that pseudospectral methods defined on arbitrary grids satisfy the eigenvalue property. In \cite{ruggiu2018pseudo} it was shown that this is the case if $H$ defines a quadrature rule that is exact for polynomials of degree $2n-1$ or higher, i.e. if $k \vec{1}^\top H \vec{x}^k = b^{k+1} - a^{k+1}$ for $k=0,\dots,2n-1$. This holds in particular if the grid $\vec{x}$ is chosen to be the Legendre-Gauss, Legendre-Gauss-Radau or Legendre-Gauss-Lobatto nodes (mapped to the interval $[a,b]$) and $H$ is a diagonal matrix containing the corresponding Gaussian quadrature weights. However, for general grids $\vec{x}$, the resulting quadrature rule is exact only for polynomials of degree $n-1$ \cite{fernandez2014generalized}. A different approach is thus necessary to generalize the result from \cite{ruggiu2018pseudo}.

The starting point here is to show that pseudospectral methods on arbitrary grids are nullspace consistent.

\begin{lemma} \label{lemma:pseudospectral_nullspace_consistency}
Every pseudospectral SBP operator is null-space consistent.
\end{lemma}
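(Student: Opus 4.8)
The plan is to exploit the defining feature of pseudospectral methods made explicit just before the statement: the order equals the number of degrees of freedom, $q = n$, so that the accuracy conditions (A) in Definition~\ref{def:SBP} determine the action of $D_+$ on an entire basis of $\mathbb{R}^{n+1}$. This is the single ingredient that separates the pseudospectral case from the general one, and everything else is routine.

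First I would dispose of the easy inclusion. Setting $j=0$ in (A) and invoking the convention $\vec{x}^0 = \vec{1}$ gives $D_+ \vec{1} = \vec{0}$, so $\vspan{\vec{1}} \subseteq \ker{D_+}$. The content of the lemma is the reverse inclusion. For this I would note that the Vandermonde matrix $V$ introduced above has the monomial grid vectors $\vec{x}^0, \vec{x}^1, \dots, \vec{x}^n$ as its columns. Since the nodes $x_0, \dots, x_n$ are distinct, $V$ is invertible, so these $n+1$ vectors form a basis of $\mathbb{R}^{n+1}$. Because $q=n$, condition (A) now prescribes $D_+$ on every basis vector, namely $D_+ \vec{x}^0 = \vec{0}$ and $D_+ \vec{x}^j = j\,\vec{x}^{j-1}$ for $j = 1, \dots, n$.

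Next I would take an arbitrary $\vec{u} \in \ker{D_+}$ and expand it in this basis as $\vec{u} = \sum_{j=0}^n c_j \vec{x}^j$. Applying $D_+$ and using linearity yields $\vec{0} = D_+ \vec{u} = \sum_{j=1}^n j\,c_j\,\vec{x}^{j-1}$, which is a linear combination of the vectors $\vec{x}^0, \dots, \vec{x}^{n-1}$. These $n$ vectors are linearly independent, being part of the basis, so each coefficient must vanish: $j\,c_j = 0$ for $j = 1, \dots, n$, forcing $c_1 = \dots = c_n = 0$. Hence $\vec{u} = c_0 \vec{1} \in \vspan{\vec{1}}$, giving $\ker{D_+} \subseteq \vspan{\vec{1}}$ and completing the argument.

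I do not expect a genuine obstacle here; once the right viewpoint is taken the proof is elementary. The one conceptual point worth emphasizing is that nullspace consistency hinges entirely on the equality $q=n$. For a general SBP operator with $q < n$, condition (A) constrains $D_+$ only on the proper subspace $\vspan{\vec{x}^0, \dots, \vec{x}^q}$, leaving its action on a complementary subspace unspecified and thereby permitting a larger nullspace. This is precisely the freedom that allows the non-nullspace-consistent counterexamples alluded to after Definition~\ref{def:nullspace_consistent}, and it clarifies why the pseudospectral family escapes that pathology.
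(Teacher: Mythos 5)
Your proof is correct and follows essentially the same route as the paper's: expand an arbitrary kernel vector in the basis of monomial grid vectors (the columns of the Vandermonde matrix $V$), apply the accuracy conditions (A) with $q=n$, and conclude from linear independence that only the $\vec{1}$-component survives. The additional remarks on the easy inclusion and on why $q=n$ is the decisive hypothesis are accurate but not needed beyond what the paper already does.
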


\begin{proof}
The columns of the Vandermonde matrix $V$ form a basis for $\mathbb{R}^{n+1}$. Thus, if $D_+ \vec{v} = \vec{0}$ and $\vec{v}$ is expanded in terms of this basis as $\vec{v} = \sum_{k=0}^n v_k \vec{x}^k$, then
$$
\vec{0} = D_+ \vec{v} = D_+ \sum_{k=0}^n v_k \vec{x}^k = \sum_{k=0}^n v_k D_+ \vec{x}^k = \sum_{k=1}^n v_k k \vec{x}^{k-1}.
$$
By linear independence of the basis vectors it follows that $v_k = 0$ for $k=1, \dots, n$ and consequently that $\vec{v} = v_0 \vec{1}$. Thus, $\ker{D_+} = \vspan{\vec{1}}$ and $D_+$ is consequently null-space consistent.
\end{proof}

With Lemma~\ref{lemma:pseudospectral_nullspace_consistency} in place, the results from Section~\ref{sec:Perturbations} can be used to establish the eigenvalue property.

\begin{theorem} \label{thm:pseudospectral}
Every pseudospectral SBP operator has the eigenvalue property.
\end{theorem}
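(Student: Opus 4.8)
The plan is to argue by contradiction, combining the lemmas already assembled in the preceding sections with the defining features of pseudospectral operators. Three facts are in play: such operators are nullspace consistent (Lemma~\ref{lemma:pseudospectral_nullspace_consistency}), they attain the maximal order $q = n$, and the grid vectors $\vec{x}^0, \dots, \vec{x}^n$ — the columns of the Vandermonde matrix $V$ — form a basis of $\mathbb{R}^{n+1}$ since the nodes are distinct. By Lemma~\ref{lemma:zeros}, every eigenvalue $\lambda$ of $\tilde{D}_+$ already satisfies $\operatorname{Re}(\lambda) \geq 0$, so the eigenvalue property can fail only if $\tilde{D}_+$ carries an eigenvalue on the imaginary axis. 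I would therefore suppose, for contradiction, that $(\lambda, \vec{w})$ is an eigenpair of $\tilde{D}_+$ with $\operatorname{Re}(\lambda) = 0$ and $\vec{w} \neq \vec{0}$. Because nullspace consistency holds by Lemma~\ref{lemma:pseudospectral_nullspace_consistency}, Lemma~\ref{lemma:grid_orthogonal} is applicable to this operator.

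Next I would invoke Lemma~\ref{lemma:grid_orthogonal}, which yields $\inner{\vec{x}^j}{\vec{w}} = 0$ for $j = 0, \dots, q$. The crucial observation is that for pseudospectral operators $q = n$, so these orthogonality relations hold for every $j = 0, \dots, n$. Since the $n+1$ vectors $\vec{x}^0, \dots, \vec{x}^n$ are linearly independent, they constitute a basis of $\mathbb{R}^{n+1}$, and $\vec{w}$ is $H$-orthogonal to each of them.

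Because $H = H^\top > 0$, the form $\inner{\cdot}{\cdot}$ is a genuine inner product, and a vector orthogonal to an entire basis must vanish. Concretely, expanding $\vec{w} = \sum_{k=0}^n c_k \vec{x}^k$ and pairing with $\vec{w}$ gives $\| \vec{w} \|_H^2 = \sum_{k=0}^n \overline{c_k} \inner{\vec{x}^k}{\vec{w}} = 0$, hence $\vec{w} = \vec{0}$. This contradicts the assumption that $\vec{w}$ is an eigenvector. Thus $\tilde{D}_+$ has no purely imaginary eigenvalue, and by Lemma~\ref{lemma:zeros} every eigenvalue has strictly positive real part, which is the eigenvalue property.

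The argument is short precisely because the analytic work has been front-loaded into Lemmas~\ref{lemma:zeros} and~\ref{lemma:grid_orthogonal}; the only genuinely pseudospectral ingredient is the identity $q = n$. I expect the main step to be conceptual rather than computational, namely recognizing that maximal order is exactly what makes the $q+1$ orthogonality conditions of Lemma~\ref{lemma:grid_orthogonal} exhaust all of $\mathbb{R}^{n+1}$, collapsing any candidate imaginary eigenvector to zero. I would also note that one need not separately appeal to $\lambda \neq 0$ here, since the contradiction is obtained directly from $\vec{w} = \vec{0}$; the nonvanishing of $\lambda$ enters only internally, inside the proof of Lemma~\ref{lemma:grid_orthogonal} via Lemma~\ref{lemma:nullspace_consistency}.
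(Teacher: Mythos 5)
Your argument is correct and coincides with the paper's own (second) proof: the paper likewise applies Lemma~\ref{lemma:grid_orthogonal} with $q=n$ to conclude $V^\top H \vec{w} = \vec{0}$ and then uses invertibility of $V$ and $H$ to force $\vec{w} = \vec{0}$. The paper additionally records an alternative one-line argument via the uniqueness of pseudospectral operators and Theorem~\ref{thm:perturbed_SBP}, but your route is the same as its direct proof.
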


\begin{proof}
Suppose that there is a pseudospectral method $D_+$ that does not have the eigenvalue property. Then, by Theorem~\ref{thm:perturbed_SBP}, another operator $D'_+$ of the same order of accuracy can be found that operates on the same grid and that is distinctly different from $D_+$. However, this violates the uniqueness of pseudospectral methods.

Alternatively, suppose that there is a pseudospectral method $D_+$ that does not have the eigenvalue property and denote by $\vec{v}$ one of the eigenvectors of $\tilde{D}_+$ that corresponds to an imaginary eigenvalue. It follows from Lemma~\ref{lemma:grid_orthogonal} that $V^\top H \vec{v} = \vec{0}$. But both $V$ and $H$ are invertible, so $\vec{v} = \vec{0}$ and can therefore not be an eigenvector, which contradicts its definition.
\end{proof}


\section{Summary} \label{sec:Summary}

Three results on the eigenvalues of SBP operators have been proven. Firstly, the eigenvalue property, which is essential in many important accuracy and convergence proofs, does not hold for all nullspace consistent SBP operators. Secondly, this problem can be addressed by carefully constructing an arbitrarily small artificial dissipation term that pushes each problematic eigenvalue into the right half-plane without affecting the other eigenvalues or the order of accuracy of the method. Thirdly, all pseudospectral methods satisfy the eigenvalue property. Thus, SBP operators used within the discontinuous Galerkin and flux reconstruction frameworks are free from the nuisance of problematic eigenvalues.


\bibliographystyle{model1b-num-names}
\bibliography{bibliography.bib}


\end{document}